\documentclass[a4paper, oneside,12pt]{amsart}
\usepackage{graphicx}
\usepackage[english]{babel}
\usepackage{mathrsfs,amssymb}
\usepackage{mathtools}
\usepackage[colorlinks, citecolor = blue]{hyperref}
\usepackage{tcolorbox}
\usepackage[shortlabels]{enumitem}
\setlist[itemize]{leftmargin=25pt}
\setlist[enumerate]{leftmargin=25pt}
\usepackage{pgfplots}
\pgfplotsset{every axis/.append style={tick label style={/pgf/number format/fixed},font=\scriptsize,ylabel near ticks,xlabel near ticks,grid=major}}

\usepackage{geometry}
\geometry{left=1in,right=1in,top=1.5in,bottom=1.5in}

\makeatletter
\newcommand{\leqnomode}{\tagsleft@true}
\newcommand{\reqnomode}{\tagsleft@false}
\makeatother

\usepackage{todonotes}

\newtheorem{theorem}{Theorem}[section]
\newtheorem{lemma}[theorem]{Lemma}

\theoremstyle{definition}

\theoremstyle{remark}

\numberwithin{equation}{section}

\DeclareMathOperator*{\esssup}{ess\,sup}
\DeclareMathOperator*{\essinf}{ess\,inf}






\newcommand{\ddn}{\mathrm{d}}

\let \a=\alpha

\def\avint_#1{\mathchoice{\mathop{\kern 0.2em\vrule width 0.6em height 0.69678ex depth -0.58065ex \kern -0.8em \intop}\nolimits_{\kern -0.4em#1}}{\mathop{\kern 0.1em\vrule width 0.5em height 0.69678ex depth -0.60387ex \kern -0.6em \intop}\nolimits_{#1}} {\mathop{\kern 0.1em\vrule width 0.5em height 0.69678ex depth -0.60387ex \kern -0.6em \intop}\nolimits_{#1}} {\mathop{\kern 0.1em\vrule width 0.5em height 0.69678ex depth -0.60387ex \kern -0.6em \intop}\nolimits_{#1}}}

\allowdisplaybreaks

\begin{document}
\title[Sharpness of some quantitative Muckenhoupt--Wheeden inequalities]
{On the sharpness of some quantitative Muckenhoupt--Wheeden inequalities}

\author[A.K. Lerner]{Andrei K. Lerner}
\address[A.K. Lerner]{Department of Mathematics,
Bar-Ilan University, 5290002 Ramat Gan, Israel}
\email{lernera@math.biu.ac.il}

\author[K. Li]{Kangwei Li}
\address[K. Li]{Center for Applied Mathematics, Tianjin University, Weijin Road 92, 300072 Tianjin, China}
\email{kli@tju.edu.cn}

\author[S. Ombrosi]{Sheldy Ombrosi}
\address[S. Ombrosi]{Departamento de Análisis Matemático y Matemática Aplicada\\ Universidad Complutense (Spain) \&
Departamento de Matemática e Instituto de Matemática. Universidad Nacional del Sur - CONICET Argentina}
\email{sombrosi@ucm.es}

\author[I.P. Rivera-R\'ios]{Israel P. Rivera-R\'ios}
\address[I.P. Rivera-R\'ios]{Departamento de An\'alisis Matem\'atico, Estad\'istica e Investigaci\'on Operativa
y Matem\'atica Aplicada. Facultad de Ciencias. Universidad de M\'alaga (M\'alaga, Spain).}
\email{israelpriverarios@uma.es}

\thanks{The first author was supported by ISF grant no. 1035/21. The second author was supported by the National Natural Science Foundation of China through project numbers 12222114 and 12001400. The fourth author was supported by Spanish Ministerio de Ciencia e Innovaci\'on grant PID2022-136619NB-I00 and by Junta de Andaluc\'ia grant FQM-354.}

\begin{abstract} 
In the recent work \cite{CUIMPRR21} it was obtained that
\[|\{x\in{\mathbb{R}^d}:w(x)|G(fw^{-1})(x)|>\a\}|\lesssim\frac{[w]_{A_1}^2}{\a}\int_{{\mathbb{R}^d}}|f|\ddn x\]
both in the matrix and scalar settings, where $G$ is either the Hardy--Littlewood maximal function or any Calder\'on--Zygmund operator.
In this note we show that the quadratic dependence on $[w]_{A_1}$ is sharp. This is done by constructing a sequence of scalar-valued weights with blowing up characteristics so that the corresponding bounds for the Hilbert transform and maximal function are exactly quadratic.
\end{abstract}

\keywords{Matrix weights, quantitative bounds, endpoint estimates.}
\subjclass[2020]{42B20, 42B25}

\maketitle
\section{Introduction}
Recall that a non-negative locally integrable function $w$ satisfies the $A_1$ condition if there exists a constant $C>0$ such that for every cube $Q\subset {\mathbb R}^d$,
$$\frac{1}{|Q|}\int_Qw\le C\essinf_{Q}w.$$
The smallest constant $C$ for which this property holds is denoted by~$[w]_{A_1}$.

In the 70s, Muckenhoupt and Wheeden \cite{MW77} established weighted
weak type $(1,1)$ bounds of the form
\begin{equation}
|\{x\in{\mathbb{R}}:w(x)|T(fw^{-1})(x)|>\a\}|\lesssim\frac{C_{w}}{\a}\int_{{\mathbb{R}}}|f|\ddn x,\label{weakscalar}
\end{equation}
where $T$ is either the Hilbert transform $H$ or the Hardy--Littlewood
maximal operator~$M$. They showed as well that $w\in A_{1}$ is a
sufficient condition for those inequalities to hold, even though it
is not necessary. Muckenhoupt and Wheeden observed that (\ref{weakscalar})
could be regarded as a first step to settle inequalities of the form
\begin{equation}
u^{1-r}\left(\left\{u^r|Tf|>\a\right\} \right)\lesssim\frac{1}{\a}\int_{\mathbb{R}}|f|w\,\ddn x\label{eq:2wMW},
\end{equation}
where $u,w$ are non-negative functions and $r\in[0,1]$. Note that for $u=w$,
for $r=0$ this inequality is the standard weak type inequality and in the case $r=1$ it reduces to (\ref{weakscalar}).
Their idea was to combine (\ref{eq:2wMW}) with interpolation with change
of measures in order to obtain two weighted estimates.

Pushing forward that idea, Sawyer \cite{S85} showed
that
\begin{equation}
uv\left(\left\{ \frac{M(fv)}{v}>\a\right\} \right)\lesssim C_{u,v}\frac{1}{\a}\int_{\mathbb{R}}|f|uv\,\ddn x,\label{eq:S}
\end{equation}
where $u,v\in A_{1}$. This estimate combined with interpolation with
change of measures allowed him to reprove Muckenhoupt's maximal theorem.

Since the aforementioned papers a number of works have been devoted to estimates related
to the ones above, that are known in the literature as mixed weak
type estimates. Some worth mentioning are \cite{CUMP05} where
(\ref{eq:S}) is extended to higher dimensions and further operators
such as Calderón--Zygmund operators via extrapolation, or \cite{LOP19}
where it is shown that $u\in A_{1}$ and $v\in A_\infty$ is sufficient for (\ref{eq:S})
to hold.

In terms of quantitative estimates  for $C_{u,v}$ in (\ref{eq:S}), and up to very recently for $C_w$ in \eqref{weakscalar}, as we will mention soon, not very much is known. Some results
are provided in the aforementioned work \cite{LOP19} or in some other
papers such as \cite{OPR} or \cite{CRR} for $C_{u,v}$ in (\ref{eq:S}). The purpose of this note is to provide some insight
on $C_w$ in \eqref{weakscalar}. However, before presenting our main result we would like to connect this problem
with the matrix weighted setting. We devote the following lines to that purpose.

In the last years quantitative matrix weighted estimates have attracted the attention of a number of authors. Up until now  only few sharp quantitative results in the matrix weight setting are known. Among them
the sharp $L^p(W)$ bounds for the maximal operator~\cite{IM19}, the sharp $L^2(W)$ bound for the square function \cite{HPV19}, and also
the sharp $L^p(W)$ bounds in terms of the $[W]_{A_q}$ constants with $1\leq q<p$ obtained in \cite{IPRR} for the maximal operator, Calder\'on--Zygmund operators and commutators.
Very recently Domelevo, Petermichl, Treil and Volberg \cite{DPTV} showed the sharpness of the $L^2(W)$ bound by $[W]_{A_2}^{3/2}$ for Calder\'on--Zygmund operators obtained previously in \cite{NPTV17}.

Making sense of endpoint matrix weighted estimates is a tricky problem. Quite recently, Cruz-Uribe et al. \cite{CUIMPRR21}
managed to obtain the first quantitative endpoint estimates in that setting. In order to state this result, we first give
several definitions.

Assume that $W$ is a matrix weight, that is, $W$ is an $n\times n$ self-adjoint matrix function with locally
integrable entries such that $W(x)$ is positive definite for a.e. $x\in {\mathbb R}^d$. Define the operator norm of $W$ by
$$\|W(x)\|:=\sup_{e\in {\mathbb C}^n:|e|=1}|W(x)e|.$$ We say that $W\in A_1$ if
$$[W]_{A_1}:=\sup_{Q}\esssup_{y\in Q}\frac{1}{|Q|}\int_Q\|W(x)W(y)^{-1}\|\ddn x<\infty.$$
It is easy to see that the matrix $A_1$ constant $[W]_{A_1}$ coincides with $[w]_{A_1}$ when $n=1$.

Given a matrix weight $W$, a vector-valued function $\vec{f}:{\mathbb R}^d\to {\mathbb C}^n$ and a Calder\'on--Zygmund operator $T$, define
$$T_W\vec{f}(x):=W(x)T(W^{-1}\vec{f})(x).$$
Next, define the maximal operator by
$$M_W\vec{f}(x):=\sup_{Q\ni x}\frac{1}{|Q|}\int_Q|W(x)W^{-1}(y)\vec{f}(y)|\ddn y.$$
The operators above have the obvious interpretation in the scalar setting.
\vskip 2mm
\noindent
{\bf Theorem A} (\cite{CUIMPRR21}). {\it We have
\begin{equation}\label{weakmatrix}
|\{x\in {\mathbb R}^d: |T_W\vec{f}(x)|>\a\}|\lesssim \frac{[W]_{A_1}^2}{\a}\int_{{\mathbb R}^d}|\vec{f}|\ddn x,
\end{equation}
and the same holds for $M_W$.}
\vskip 2mm

At this point we are in the position to state the main result of this note.

\begin{theorem}\label{mainresult}
In the scalar-valued setting the quadratic dependence on $[w]_{A_1}$ in (\ref{weakmatrix}) is sharp for $T_w$ and for $M_w$.
\end{theorem}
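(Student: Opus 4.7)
The plan is to exhibit, for every large integer $N$, a scalar $A_1$ weight $w_N$ on $\mathbb{R}$ with $[w_N]_{A_1}\sim N$, together with a test function $f_N$ and a threshold $\alpha_N>0$ satisfying
\[
\bigl|\{x\in\mathbb{R}: w_N(x)|T(f_Nw_N^{-1})(x)|>\alpha_N\}\bigr|\;\ge\;c\,\frac{[w_N]_{A_1}^{2}}{\alpha_N}\int_{\mathbb{R}}|f_N|\ddn x,
\]
for some absolute $c>0$ and for both $T=H$ and $T=M$. Since Theorem~A provides the matching upper bound, letting $N\to\infty$ forces the exponent on $[w]_{A_1}$ to be exactly $2$, even in the scalar setting.

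A natural family of weights to try is the truncated one-dimensional power weight $w_N(x)=|x|^{-\delta_N}\chi_{|x|\le 1}(x)+\chi_{|x|>1}(x)$ with $\delta_N=1-1/N$; the standard computation on cubes $(0,r)$, $r\to 0^+$, gives $[w_N]_{A_1}\sim N$, with the extremizers accumulating at the singularity of $w_N$ at the origin. I then take $f_N=w_N\chi_{I_N}$ for an interval $I_N=(a_N,b_N)\subset(0,1)$, so that $g_N:=f_Nw_N^{-1}=\chi_{I_N}$; the $L^1$-norm $\int|f_N|\ddn x=\int_{I_N}w_N$ is then computable in closed form, and the problem reduces to a lower bound for $|\{w_N|T\chi_{I_N}|>\alpha_N\}|$.

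For $T=H$ one uses the identity $H\chi_{(a,b)}(x)=\pi^{-1}\log|(x-a)/(x-b)|$, which exhibits logarithmic singularities at the endpoints of $I_N$; for $T=M$ one uses the pointwise bound $M\chi_{I_N}(x)\simeq|I_N|/\dist(x,I_N)$. In either case $w_N|T\chi_{I_N}|$ is pointwise large on two disjoint regions: a neighborhood of the singularity of $w_N$ at $0$, and a neighborhood of the endpoint $a_N$ of $I_N$. The parameters $a_N$ and $\alpha_N$ (with $\delta_N=1-1/N$ fixed) are tuned simultaneously so that each region contributes a factor $\sim N$ to the super-level set: one factor via the algebraic singularity $w_N(x)\sim|x|^{-\delta_N}$ near $0$, and a second factor via the endpoint singularity of $T\chi_{I_N}$, amplified by $w_N(a_N)\sim a_N^{-\delta_N}$. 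A bookkeeping computation suggests that $a_N$ must be taken super-polynomially small in $N$, after which the ratio $\alpha_N|\{w_N|T\chi_{I_N}|>\alpha_N\}|/\int|f_N|$ reaches order $N^2\sim[w_N]_{A_1}^2$.

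The principal obstacle is precisely this simultaneous tuning. Any naive single-scale choice---for example $f_N$ a fixed bump away from the singularity of $w_N$, or an indicator on a fixed interval---saturates at most one of the two singularities at the threshold $\alpha_N$ and yields only the linear bound $\gtrsim[w_N]_{A_1}$. Achieving the quadratic bound requires $a_N$, $\alpha_N$, and $\delta_N$ to be coupled so that the algebraic singularity of $w_N$ and the kernel-induced singularity of $T\chi_{I_N}$ become critical at the same threshold; verifying this coupling for both $H$ and $M$, and ensuring that the truncation of $w_N$ outside $(-1,1)$ does not introduce competing far-field contributions, forms the technical core of the argument. Should the plain power weight prove too rigid, a Coifman--Rochberg-type weight $w_N=(M\mu_N)^{\delta_N}$ for a suitably chosen discrete measure $\mu_N$ offers additional flexibility while preserving $[w_N]_{A_1}\sim N$.
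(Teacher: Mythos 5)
The construction you propose does not reach the quadratic bound, and the obstruction is structural rather than a matter of bookkeeping. Fix $\delta_N=1-1/N$, $w_N(x)=|x|^{-\delta_N}$ on $(-1,1)$, $w_N\equiv 1$ elsewhere, and $I_N=(a,b)\subset(0,1)$, so that $\int|f_N|=\int_{I_N}w_N$. For $T=M$ the ``endpoint singularity of $T\chi_{I_N}$'' simply does not exist: $M\chi_{I_N}\le 1$ pointwise, so the only source of a large factor is the weight itself. A direct computation case by case (inside $I_N$, on $(0,a)$, and on $(b,\infty)$, using $M\chi_{I_N}(x)\sim\min(1,|I_N|/\dist(x,I_N))$) shows that for every $\alpha>0$ and every choice of $I_N$ one has
\[
\alpha\,\bigl|\{x:w_N(x)\,M\chi_{I_N}(x)>\alpha\}\bigr|\ \lesssim\ \int_{I_N}w_N,
\]
uniformly in $N$: the ratio saturates at an absolute constant, not at $[w_N]_{A_1}$ and certainly not at $[w_N]_{A_1}^2$. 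For $T=H$ the endpoint singularity of $H\chi_{(a,b)}$ is only logarithmic, so forcing it to contribute a factor $N$ at the threshold $\alpha_N$ confines you to a set of measure $\sim e^{-N}$, which collapses the super-level set estimate instead of boosting it. The claim that ``each region contributes a factor $\sim N$'' is the crux of the proposal and is precisely the step that fails.

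The underlying reason is that the profile $x^{-\delta_N}$ is far too rigid. Once you pass to $g=fw^{-1}$, what is really needed is a weight for which $w(x)$ is \emph{large relative to} $|T g(x)|^{-1}$ on a set of Lebesgue measure $\sim N^2$, while $[w]_{A_1}\sim N$. The paper achieves this with $g=\chi_{[0,1]}$, for which $|Tg(x)|\ge 1/x$ when $x>1$, and a step-function weight built on $[0,2^{N+2}]$ that \emph{grows} like $x$ on designated intervals $I_k\subset[2^k,2^{k+1})$ of length $k$ (so that $w>x$ there), then decays through a geometric ``staircase'' of subintervals so that the $A_1$ constant stays $\simeq N$. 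The set $\{w>x\}$ then has measure $\sum_{k=2}^{N}k\simeq N^2$, and checking $[w]_{A_1}\simeq N$ is a careful but elementary multi-scale case analysis. None of this is visible in a single-singularity power weight; the gain of $N^2$ comes from dispersing $N$ different scales, each contributing measure $\sim k$, rather than from coupling two singularities at one scale. Your fallback to Coifman--Rochberg weights $(M\mu_N)^{\delta_N}$ is in principle flexible enough to encode such a multi-scale structure, but the proposal does not specify $\mu_N$, does not verify $[w_N]_{A_1}\sim N$ for such a choice, and does not address the $A_1$ upper bound across scales, which is where all the work in the paper's argument resides. As written, the proof has a genuine gap at its central step and, in the stated power-weight form, cannot be repaired.
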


As a direct consequence of Lemma \ref{lem:connect} below, this result shows the sharpness of $[W]_{A_1}^2$ in Theorem A in the matrix setting as well.

An interesting phenomenon here is the contrast between the strong $L^2(W)$ and the weak $L^1(W)$ bounds for Calder\'on--Zygmund operators. As we mentioned above, the recent work {\cite{DPTV}} establishes the sharpness
of $[W]_{A_2}^{3/2}$ in the matrix setting. Comparing this with the linear $A_2$ bound in the scalar case \cite{H12}, we see that the sharp weighted $L^2$ bounds for Calder\'on--Zygmund operators are different in the matrix and scalar settings. However, Theorem \ref{mainresult} shows that the sharp weighted weak $L^1$ bounds are the same in both settings.

\section{Proof of Theorem \ref{mainresult}}
\subsection{Connection between the scalar and the matrix weighted estimates}
\begin{lemma}\label{lem:connect}
Assume that $G_{W}$ stands either for $T_{W}$ or for $M_{W}$. Then
if
\[
\left|\left\{ x\in\mathbb{R}^{d}\ :\ |G_{W}\vec{f}(x)|>\alpha\right\} \right|\leq c\varphi\left([W]_{A_{1}}\right)\frac{1}{\alpha}\int_{{\mathbb R}^d}|\vec{f}|\ddn x,
\]
we have that for every $w\in A_{1}$
\[
\left|\left\{ x\in\mathbb{R}^{d}\ :\ |J_{w}f(x)|>\alpha\right\} \right|\leq c\varphi\left([w]_{A_{1}}\right)\frac{1}{\alpha}\int_{{\mathbb R}^d}|f|\ddn x,
\]
where $J_{w}$ stands, respectively, for $wT(fw^{-1})$ or for $wM(fw^{-1})$.
\end{lemma}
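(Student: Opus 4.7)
The plan is to recognize that the scalar-weighted operators $J_w$ are nothing but the $n=1$ case of the matrix-weighted operators $G_W$, so the lemma follows by direct specialization rather than a genuine reduction argument. Given a scalar $A_1$ weight $w$, I would view it as a $1\times 1$ self-adjoint matrix weight $W(x):=w(x)$ and any scalar $f$ as a vector-valued function with $n=1$. No substantive obstacle appears here; only the bookkeeping verifications that the relevant matrix quantities specialize correctly at $n=1$ are needed.

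The first step is to check that the matrix $A_1$ constant reduces to the scalar one. Since $\|W(x)\|=\sup_{|e|=1}|w(x)e|=w(x)$, one has $\|W(x)W(y)^{-1}\|=w(x)/w(y)$, and because $w(y)^{-1}$ is independent of $x$ the definition factors as
\[
[W]_{A_1}=\sup_Q \esssup_{y\in Q}\frac{1}{w(y)}\cdot\frac{1}{|Q|}\int_Q w(x)\,\ddn x=\sup_Q\frac{\frac{1}{|Q|}\int_Q w}{\essinf_Q w}=[w]_{A_1},
\]
which recovers the scalar $A_1$ constant. This is the computation explicitly anticipated in the paragraph immediately preceding Theorem A.

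The second step is to identify the operators themselves. For a scalar $f$ regarded in the $n=1$ framework one computes
\[
T_W\vec f(x)=w(x)\,T(w^{-1}f)(x),\qquad M_W\vec f(x)=w(x)\sup_{Q\ni x}\frac{1}{|Q|}\int_Q\frac{|f(y)|}{w(y)}\,\ddn y=w(x)M(w^{-1}f)(x),
\]
which coincide with $J_w f$ in the two cases of interest (using that the Hardy--Littlewood maximal operator is insensitive to moduli). Applying the hypothesized matrix weak-type bound with constant $\varphi([W]_{A_1})=\varphi([w]_{A_1})$ to the scalar input $f$ then produces the desired scalar weak-type inequality at once, completing the proof.
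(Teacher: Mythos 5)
Your computations are all correct, but choosing $n=1$ reduces the lemma to a tautology and misses its actual content. The matrix operators $T_W$ and $M_W$ in the paper are defined for $n\times n$ matrix weights with $n$ fixed (and the interesting case is $n\geq2$), and the entire purpose of Lemma~\ref{lem:connect} is to transfer the scalar lower bound of Theorem~\ref{thm:m} to that matrix setting: immediately after Theorem~\ref{mainresult} the authors state that the lemma is what shows the sharpness of $[W]_{A_1}^2$ ``in the matrix setting as well.'' For this to have content one must produce, from each scalar weight $w$ with $[w]_{A_1}\simeq N$, an $n\times n$ matrix weight $W$ with $[W]_{A_1}\simeq N$ exhibiting the same weak-type lower bound. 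Your argument views $w$ as a $1\times1$ matrix, and in that case the matrix statement \emph{is} the scalar statement, so the implication becomes $A\Rightarrow A$; nothing is transferred to $n>1$.

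The paper's proof fills exactly this gap: given $w\in A_1$ and $n\geq1$, set $W=wI_n$ and, for a scalar $f$, set $\vec f=(f,0,\dots,0)^t$. One checks directly that $\|W(x)W(y)^{-1}\|=w(x)/w(y)$, hence $[W]_{A_1}=[w]_{A_1}$, and that $W^{-1}\vec f=(fw^{-1},0,\dots,0)^t$, so componentwise $|T_W\vec f|=|wT(fw^{-1})|$ and likewise $|M_W\vec f|=|wM(fw^{-1})|$, i.e.\ $|G_W\vec f|=|J_wf|$ pointwise. Applying the assumed $n\times n$ matrix bound to this particular $W$ and $\vec f$ then yields the scalar weak-type bound with the same constant $\varphi([w]_{A_1})$. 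You should replace the $n=1$ specialization with the embedding $W=wI_n$, $\vec f=(f,0,\dots,0)^t$; the verifications you already performed carry over verbatim.
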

\begin{proof}
Let $w\in A_{1}$. It is clear that $W=wI_{n}$ is a matrix $A_{1}$
weight. Furthermore,
\[
[W]_{A_{1}}=[w]_{A_{1}}.
\]
Now given a scalar function $f$, we build $\vec{f}=(f,0,\dots,0)^{t}.$
Note that for these choices of $\vec{f}$ and $W$, clearly
\[
|G_{W}\vec{f}(x)|=|J_{w}f(x)|.
\]
This ends the proof.
\end{proof}

\subsection{Construction of a family of weights providing the lower bound} We prove Theorem \ref{mainresult} via the following result.
\begin{theorem}\label{thm:m}
For any integer $N>20$, there exists a scalar weight $w\in A_1(\mathbb R)$ satisfying the following properties:
\begin{enumerate}
\item $\int_0^1w=1$;
\item $[w]_{A_1}\simeq N$;
\item $|\{x\in (1, \infty): w(x)> x\}|\gtrsim N^2$.
\end{enumerate}
\end{theorem}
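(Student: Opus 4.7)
The construction of $w$ is the heart of the theorem; once (1)--(3) hold, Theorem~\ref{mainresult} follows by applying the scalar case of Theorem~A (via Lemma~\ref{lem:connect}) with $f=w\mathbf 1_{(0,1)}$ and $\alpha$ a constant, because $M\mathbf 1_{(0,1)}(x)\gtrsim 1/x$ for $x>1$ (and analogously for $H$) turns (3) into $[w]_{A_1}^2\gtrsim N^2$, preventing the exponent $2$ in \eqref{weakmatrix} from being replaced by anything smaller. My plan therefore focuses entirely on the construction.

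I would build $w$ multiscale on the dyadic intervals $I_k=(2^k,2^{k+1})$. Set $w\equiv 1$ on $(0,1)$, so (1) is automatic and $\essinf_{(0,1)} w=1$; for every cube $(0,R)$ with $R>1$ the $A_1$ condition then reduces to $\int_0^R w\lesssim NR$, the binding constraint throughout. On each $I_k$ with $k$ in a range $k_0\le k\le K$ (ultimately $k_0\sim\log_2 N$ and $K-k_0\sim N$), install a bump of rescaled power-law shape $C_k|x-x_k|^{-(1-1/N)}$, tuned so that $w>x$ on a subset of $I_k$ of measure $\gtrsim N$; this profile has intrinsic $A_1$ constant of order $N$, and the inequality $[u+v]_{A_1}\le\max([u]_{A_1},[v]_{A_1})$ preserves the constant under summation of bumps. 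Away from the bumps the weight stays at the baseline $1$, so the essinf is preserved. Property (3) then follows from $|\{x>1:w(x)>x\}|=\sum_k|E\cap I_k|\gtrsim N(K-k_0)\sim N^2$.

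The main obstacle is choosing the bump profile. A pure step-function bump of height $\sim 2^{k+1}$ and width $\sim N$ gives the right $|E\cap I_k|$ but destroys local $A_1$ on a small cube straddling its boundary, where the avg/essinf ratio jumps to $\sim 2^k$. A purely additive power-law $\sum C_k|x-x_k|^{-(1-1/N)}$ is globally $A_1$-bounded with constant $\sim N$ but, by a Lagrange analysis against the normalization $\int_0^1 w=1$, yields only $|E|=O(1)$. The correct construction must interpolate between these extremes: each bump is a rescaled, truncated power-law whose contribution to $|E|$ is $\gtrsim N$ per scale while its profile is smooth enough for $[w]_{A_1}\lesssim N$ across bump boundaries. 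Verifying $[w]_{A_1}\simeq N$ on each of the three families of relevant cubes---cubes inside a single bump (handled by the local power-law structure), cubes transitioning between two adjacent bumps (handled by the smoothness of the profile and the base level), and cubes of the form $(0,R)$ accumulating many bumps (handled by the cumulative mass budget $\int_0^R w\lesssim NR$)---is the technical core of the argument.
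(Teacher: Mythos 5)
Your high-level strategy---install multiscale bumps on dyadic intervals, each lifting $w$ above $x$, while budgeting the total mass so that $\int_0^R w\lesssim NR$---is the right one and matches the paper's. You also correctly identify the tension between step-function plateaus (bad local $A_1$ at the jump) and pure power-law tails (too little measure in $\{w>x\}$). However, the proposal stops exactly at the point where the real work begins, and the specific bump profile you put forward is the wrong one, not just ``unoptimized.''

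The issue with $u_k(x)=C_k|x-x_k|^{-(1-1/N)}$ is quantitative, not cosmetic. To have $w>x\sim 2^k$ on a set of measure $\sim N$ inside $J_k=(2^k,2^{k+1})$ you are forced to take $C_k\sim 2^kN^{1-1/N}\sim 2^kN$. With this normalization the bump's edge value at distance $\sim 2^{k-1}$ from its center is $C_k(2^{k-1})^{-(1-1/N)}\sim N$, not $\sim 1$: the exponent $1-1/N$ is so close to $1$ that the profile cannot decay from height $2^{k+1}$ back to the baseline within a single dyadic block. Consequently the mass of one bump is $\int C_k r^{-(1-1/N)}\,\ddn r\sim C_k\cdot N\sim 2^kN^2$; summing over $\sim N$ scales gives total mass $\sim 2^K N^2$ on an interval of length $2^{K+1}$, i.e.\ an $A_1$ constant $\sim N^2$, not $N$. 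With $|E|\sim N^2$ and $[w]_{A_1}\sim N^2$ you only witness a linear lower bound, so sharpness of the quadratic exponent is not established. The ``Lagrange analysis'' you mention pulls in the opposite direction (showing $|E|=O(1)$ once you renormalize), but either way the power-law family does not realize $[w]_{A_1}\simeq N$ and $|E|\gtrsim N^2$ simultaneously, and you never produce the ``interpolated'' profile that is supposed to. That missing construction is the entire content of the theorem.

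For comparison, the paper's construction uses a decay profile that is essentially $\sim 2^k/r$ (realized as dyadic steps $2^j$ over geometrically shrinking intervals $(L_k^\pm)^j$), together with a plateau $I_k$ of width $k$ (not $N$) at height $2^{k+1}$. The $1/r$ decay reaches the baseline within $J_k$ and has mass $\sim 2^kk$; the local $A_1$ constant of this profile at scale $2^k$ is $\sim k$, not a fixed $N$, so it automatically stays below $N$ across the range $2\le k\le N$ while the plateau contributions $\sum_{k=2}^N k\simeq N^2$ give (3). The final verification of $[w]_{A_1}\lesssim N$ is done via the elementary ``Claim A'' (adjacent step ratios are bounded, so an interval meeting $m$ steps has $A_1$ ratio $\le C^m$) together with a short case analysis on $|I|$; your sketch of ``three families of cubes'' is compatible with this in spirit, but again none of it is carried out. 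In short: right framework, wrong bump shape, and the construction---which is the proof---is absent.
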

Observe that Theorem \ref{thm:m} implies Theorem \ref{mainresult} immediately because if we take
$f=\chi_{[0,1]}$, then for $x>1$,
\[
Mf(x)= \frac 1x,\qquad Hf(x)= \int \frac{f(y)}{x-y} \ddn y >\frac 1x.
\]
Hence, if $T$ is either $M$ or $H$, then
\begin{align*}
\| w Tf\|_{L^{1,\infty}}&\ge \big|\{x\in (1, \infty): w(x)|Tf(x)|>1 \} \big|\\
& \ge \big|\{x\in (1, \infty): w(x)\cdot\frac 1x>1 \} \big|\\
&\gtrsim N^2 \simeq [w]_{A_1}^2 \|f\|_{L^1(w)}.
\end{align*}
The rest of this section will be devoted to proving Theorem \ref{thm:m}.
\begin{proof}[Proof of Theorem \ref{thm:m}]
For $k=2,3,\ldots, N$ we denote $J_k=[2^k, 2^{k+1})$. We will split $J_k$ into small intervals.
Set $I_k=[2^k, 2^k+k)$ and $L_k= J_k\setminus I_k=[2^k+k, 2^{k+1})$. Let $L_k^{-}$ and $L_k^{+}$ be the left and right halves of $L_k$, respectively. Next we define $(L_k^{-})^1$ to be the right half of $L_k^{-}$ and $(L_k^{+})^1$ the left half of $L_k^{+}$. Then
\begin{enumerate}
\item[$\bullet$]when $(L_k^{-})^j=[a_{k}^j, b_k^j)$ is defined, let $(L_k^{-})^{j+1}=[a_{k}^{j+1}, b_{k}^{j+1})$ satisfy that
\[
b_{k}^{j+1}=a_k^j,\qquad |(L_k^{-})^{j+1}|= \frac 12 |(L_k^{-})^j|;
\]
\item[$\bullet$]when $(L_k^{+})^j=[c_{k}^j, d_k^j)$ is defined, let $(L_k^{+})^{j+1}=[c_{k}^{j+1}, d_{k}^{j+1})$ satisfy that
\[
c_{k}^{j+1}=d_k^j,\qquad |(L_k^{+})^{j+1}|= \frac 12 |(L_k^{+})^j|.
\]
\end{enumerate}
The process is stopped when we have $(L_k^{-})^{k-1}$ and $(L_k^{+})^{k-1}$ defined, and we simply define
\[
(L_k^{-})^{k}= [2^k+k, 2^k+k+ \frac{|L_k^-|}{2^{k-1}}),\,\, (L_k^{+})^{k}= [2^{k+1}-\frac{|L_k^+|}{2^{k-1}}, 2^{k+1}).
\]Now we have split $J_k$ into disjoint intervals, namely,
\[
J_k= I_k  \cup   \mathop{\cup}_{j=1}^k  (L_k^{-})^j \cup  \mathop{\cup}_{j=1}^k (L_k^{+})^j.
\]

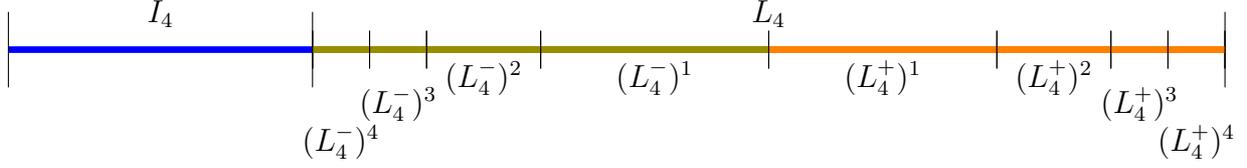
\begin{figure}[h!]
        \centering

         \makebox[\textwidth][c]{
\begin{tikzpicture}[scale=1]
    \draw (16,0.25)-- (32,0.25); 
     \draw[line width=2.5pt, blue] (16,0.25)-- (20,0.25);
    \foreach \x in {16,20,32} {
          \draw (\x,0.75) -- (\x,-0.25);

    }
    \draw (18,0.4) node[above] {$I_4$};
    \draw (26,0.4) node[above] {$L_4$};

    \draw[line width=2.5pt, olive] (20,0.25)-- (26,0.25);
    \draw[line width=2.5pt, orange] (26,0.25)-- (32,0.25);
     \foreach \x in {20,26,32} {
       \draw (\x,0.5) -- (\x,0);
    }

    \draw[line width=2.5pt, olive] (23,0.25)-- (26,0.25);
     \draw[line width=2.5pt, orange] (26,0.25)-- (29,0.25);
     \foreach \x in {23,26,29} {
        \draw (\x,0.5) -- (\x,0);
    }
     \draw (24.5,0.25) node[below] {$(L_4^-)^1$};
    \draw (27.5,0.25) node[below] {$(L_4^+)^1$};

     \draw[line width=2.5pt, olive] (21.5,0.25)-- (23,0.25);
     \draw[line width=2.5pt, orange] (29,0.25)-- (30.5,0.25);
     \foreach \x in {21.5,23,29,30.5} {
       \draw (\x,0.5) -- (\x,0);
    }
    \draw (22.25,0.25) node[below] {$(L_4^-)^2$};
    \draw(29.75,0.25) node[below] {$(L_4^+)^2$};
%
     \draw[line width=2.5pt, olive] (20.75,0.25)-- (21.5,0.25);
     \draw[line width=2.5pt, orange] (30.5,0.25)-- (31.25,0.25);
     \foreach \x in {21.5,20.75,31.25,30.5} {
        \draw (\x,0.5) -- (\x,0);
    }

     \draw (21.125,-0.1) node[below] {$(L_4^-)^3$};
    \draw (30.875,-0.1) node[below] {$(L_4^+)^3$};

 \draw[line width=2.5pt, olive] (20,0.25)-- (20.75,0.25);
     \draw[line width=2.5pt, orange] (31.25,0.25)-- (32,0.25);
     \foreach \x in {20,20.75,31.25,32} {
        \draw (\x,0.5) -- (\x,0);
    }

     \draw (20.375,-0.6) node[below] {$(L_4^-)^4$};
    \draw (31.625,-0.6) node[below] {$(L_4^+)^4$};
    \end{tikzpicture}
}
\caption{Component intervals of $J_4$.}
\end{figure}

Define
\[
w_k = 2^{k+1} \chi_{I_k}+ \sum_{j=1}^k 2^j \chi_{ (L_k^{-})^j  \cup  (L_k^{+})^j  }
\]
and our weight on $[0, 2^{N+2}]$ is
\begin{equation*}
w(x)=\begin{cases}
 \chi_{[0, 4)}(x)+ \sum\limits_{k=2}^N w_k(x), \quad & x\in [0, 2^{N+1}),\\
2^{N}, \quad & x=2^{N+1},\\
w(2^{N+2}-x), \quad & x\in [2^{N+1}, 2^{N+2}].
 \end{cases}
\end{equation*}

 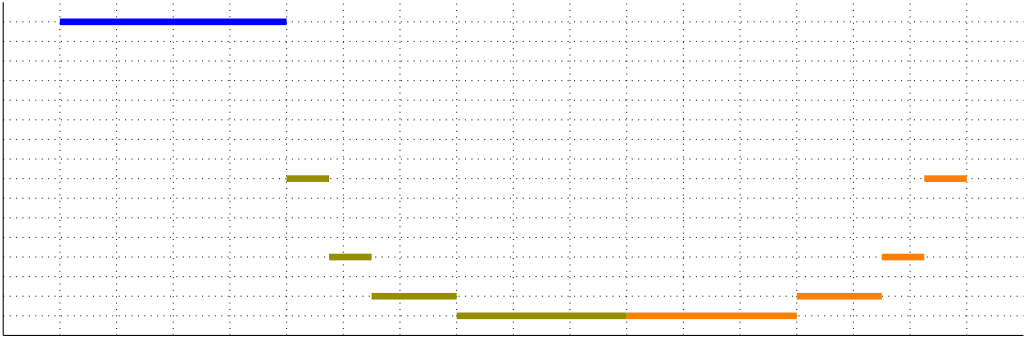
\begin{figure}[h!]
        \centering

         \makebox[\textwidth][c]{
 \begin{tikzpicture}
            \begin{axis}[
            width=15cm,height=6cm,
            ymin=0,ymax=34,xmin=15,xmax=33,
            xtick={0,1,...,32},
            ytick={0,2,...,32},
            y axis line style={draw opacity=0},
            grid style={dotted,gray!30!black},
            yticklabels={,,},
            xticklabels={,,},
            axis x line=center,
            axis y line=center,
            axis  line style={-},
            ticks=none
            ]
                \addplot[line width=2.5pt,olive,samples at={20,20.75}] {16};
                \addplot[line width=2.5pt,orange,samples at={31.25,32}] {16};
                \addplot[line width=2.5pt,blue,samples at={16,20}] {32};
                \foreach \j
                [evaluate=\j as \u using 4+16+(32-4-16)/2-(32-4-16)/2*(1-1/2^(\j+1)),
                evaluate=\j as \v using 4+16+(32-4-16)/2-(32-4-16)/2*(1-1/2^(\j)),
                evaluate=\j as \w using 2^(\j+1)]
                in {0,1,2} \addplot[line width=2.5pt,olive,samples at={\u,\v}] {\w};
                \foreach \j
                [evaluate=\j as \u using 4+16+(32-4-16)*(1-1/2^(\j+2)),
                evaluate=\j as \v using 4+16+(32-4-16)*(1-1/2^(\j+1)),
                evaluate=\j as \w using 2^(\j+1)]
                in {0,1,2} \addplot[line width=2.5pt,orange,samples at={\u,\v}] {\w};

            \end{axis}
        \end{tikzpicture}
}
\caption{Graph of $w_4$}
\end{figure}
 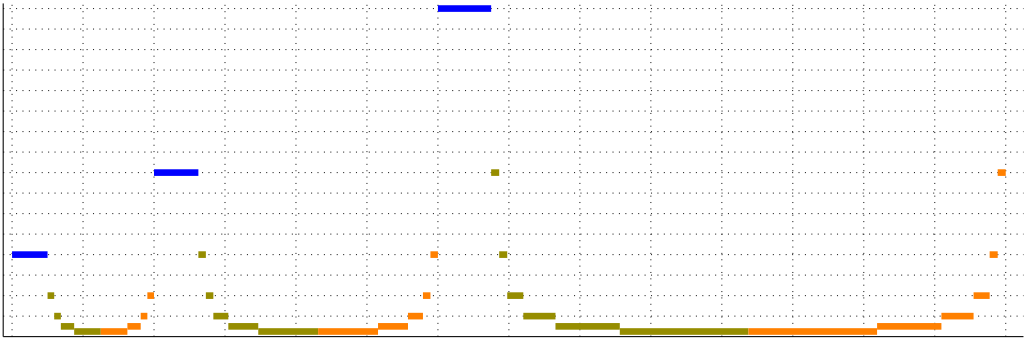
\begin{figure}[h!]
        \centering

         \makebox[\textwidth][c]{
        \begin{tikzpicture}
            \begin{axis}[width=15cm,height=6cm,
            ymin=0,ymax=130,xmin=15, xmax=130,
            xtick={0,8,...,128}, ytick={0,8,...,128},
            legend style={at={(0.9,0.9)},anchor=north},
            grid style={dotted,gray!30!black},
            yticklabels={,,},
            xticklabels={,,},
            axis x line=center,
            axis y line=center,
            y axis line style={draw opacity=0},
            axis  line style={-},
            ticks=none]
                \addplot[line width=2.5pt,olive,samples at={20,20.75}] {16};
                \addplot[line width=2.5pt,orange,samples at={31.25,32}] {16};
                \addplot[line width=2.5pt,blue,samples at={16,20}] {32};
                \foreach \j
                [evaluate=\j as \u using 4+16+(32-4-16)/2-(32-4-16)/2*(1-1/2^(\j+1)),
                evaluate=\j as \v using 4+16+(32-4-16)/2-(32-4-16)/2*(1-1/2^(\j)),
                evaluate=\j as \w using 2^(\j+1)]
                in {0,1,2} \addplot[line width=2.5pt,olive,samples at={\u,\v}] {\w};
                \foreach \j
                [evaluate=\j as \u using 4+16+(32-4-16)*(1-1/2^(\j+2)),
                evaluate=\j as \v using 4+16+(32-4-16)*(1-1/2^(\j+1)),
                evaluate=\j as \w using 2^(\j+1)]
                in {0,1,2} \addplot[line width=2.5pt,orange,samples at={\u,\v}] {\w};

               \addplot[line width=2.5pt,olive,samples at={37,37.84375}] {32};
                \addplot[line width=2.5pt,orange,samples at={63.15625,64}] {32};
                \addplot[line width=2.5pt,blue,samples at={32,37}] {64};
                \foreach \j
                [evaluate=\j as \u using 5+32+(64-5-32)/2-(64-5-32)/2*(1-1/2^(\j+1)),
                evaluate=\j as \v using 5+32+(64-5-32)/2-(64-5-32)/2*(1-1/2^(\j)),
                evaluate=\j as \w using 2^(\j+1)]
                in {0,1,2,3} \addplot[line width=2.5pt,olive,samples at={\u,\v}] {\w};
                \foreach \j
                [evaluate=\j as \u using  5+32+(64-5-32)*(1-1/2^(\j+2)),
                evaluate=\j as \v using 5+32+(64-5-32)*(1-1/2^(\j+1)),
                evaluate=\j as \w using 2^(\j+1)]
                in {0,1,2,3} \addplot[line width=2.5pt,orange,samples at={\u,\v}] {\w};

                \addplot[line width=2.5pt,olive,samples at={70,70.90625}] {64};
                \addplot[line width=2.5pt,orange,samples at={127.09375,128}] {64};
                \addplot[line width=2.5pt,blue,samples at={64,70}] {128};
                \foreach \j
                [evaluate=\j as \u using 6+64+(128-6-64)/2-(128-6-64)/2*(1-1/2^(\j+1)),
                evaluate=\j as \v using 6+64+(128-6-64)/2-(128-6-64)/2*(1-1/2^(\j)),
                evaluate=\j as \w using 2^(\j+1)]
                in {0,1,2,3,4} \addplot[line width=2.5pt,olive,samples at={\u,\v}] {\w};
                \foreach \j
                [evaluate=\j as \u using  6+64+(128-6-64)*(1-1/2^(\j+2)),
                evaluate=\j as \v using 6+64+(128-6-64)*(1-1/2^(\j+1)),
                evaluate=\j as \w using 2^(\j+1)]
                in {0,1,2,3,4} \addplot[line width=2.5pt,orange,samples at={\u,\v}] {\w};

            \end{axis}
        \end{tikzpicture}
        }
\caption{Joint graph of $w_4$, $w_5$ and $w_6$.}
\end{figure}

Finally we extend $w(x)$ from $[0, 2^{N+2}]$ to $\mathbb R$ periodically with period $2^{N+2}$.  Such a weight trivially satisfies $\int_0^1w=1$. Moreover, since $w(x)>x$ on $I_k$, we have
\begin{align*}
|\{x\in (1, \infty): w(x)> x\}|\ge \sum_{k=2}^N |I_k|=\sum_{k=2}^Nk\simeq N^2.
\end{align*}

Hence it remains to check that $[w]_{A_1} \simeq N$. Since $w$ is periodic on $\mathbb R$ and symmetrical on $[0, 2^{N+2}]$, it suffices to prove that
\[
\sup_{I \subset [0, 2^{N+1}]}\frac {w(I)}{|I| \essinf\limits_{x\in I}w(x)} \simeq N
\]
(we use the standard notation $w(E)=\int_Ew$).

Observe that $|L_k^{-}|=|L_k^{+}|=\frac{1}{2}(2^k-k)$. Further,
$$|(L_k^{-})^j|=|(L_k^{+})^j|=\frac{1}{2^{j+1}}(2^k-k),\quad j=1,\dots, k-1,$$
and
$$|(L_k^{-})^k|=|(L_k^{+})^k|=\frac{1}{2^k}(2^k-k).$$
Hence,
\begin{eqnarray*}
w_k(J_k)&=&2^{k+1}|I_k|+\sum_{j=1}^k2^j\big(|(L_k^{-})^j|+|(L_k^{+})^j|\big)\\
&=&2^{k+1}k+(k-1)(2^k-k)+2(2^k-k)\simeq k2^k.
\end{eqnarray*}
From this, when $I = [0, 2^{N+1}]$ we have
\begin{align*}
\frac {w(I)}{|I|}= 2^{-(N+1)}\Big( 4+\sum_{k=2}^N w_k(J_k) \Big) &\simeq  2^{-(N+1)} \Big( 4+\sum_{k=2}^N k 2^{k}\Big) \\&\simeq N = N \essinf\limits_{x\in I}w(x).
\end{align*}

Therefore, we are left to prove that for any $I\subset  [0, 2^{N+1}]$,
\begin{equation}\label{eq:a1up}
\frac{w(I)}{|I|}\lesssim N \essinf\limits_{x\in I}w(x).
\end{equation}

At this point we will make the following elementary observation. Our weight $w$ is a step function, and for each two adjacent intervals from its definition, the ``jump" of $w$ is at most 2.
Since the ``jumps" are multiplicative we have the following.

\vskip 1mm
{\bf Claim A.} If $I$ intersects at most $m$ intervals from the definition of $w$, then
$$
\frac{w(I)}{|I|}\le \max_{x\in I} w(x)\le 2^m \min_{x\in I} w(x)= 2^m\essinf\limits_{x\in I}w(x).
$$

\vskip 1mm
In what follows we will prove \eqref{eq:a1up} according to the size of $I$.

\vskip 1mm
{\bf Case 1}. $|I|\le 4$. In this case, note that in each $J_k$ $(k\ge 2)$,  $(L_k^-)^{k-1}, (L_k^-)^k$ and $(L_k^+)^{k-1},(L_k^+)^k$ are the smallest intervals, and
\[
|(L_k^-)^{k-1}| =|(L_k^-)^k| =|(L_k^+)^{k-1}|  =  |(L_k^+)^k| = 1-\frac k{2^k}\ge \frac 12.
\]
Hence $I$ intersects at most $9$ intervals from the definition of $w$, and we are in position to apply Claim A with $m=9$.

\vskip 1mm
{\bf Case 2}. $|I|> 4$. In this case, we may assume $|I|\in (2^{k_0}, 2^{k_0+1}]$ with some $k_0\ge 2$. We may further assume $k_0<N-10$ as otherwise
\[
\frac{w(I)}{|I|}\lesssim 2^{-N} w([0, 2^{N+1}])\simeq N \essinf\limits_{x\in I}w(x).
\]

\vskip 1mm
{\bf Case 2a}. $I\subset [0, 2^{k_0+10}]$. Then similarly to above,
\[
\frac{w(I)}{|I|}< 2^{-k_0} w([0, 2^{k_0+10}])\simeq k_0 \essinf\limits_{x\in I}w(x).
\]

\vskip 1mm
{\bf Case 2b}. $I\not\subset [0, 2^{k_0+10}]$. Then $I\subset [2^{k_0+9}, 2^{N+1}]$. Denote by $c_k$ the center of $L_k$.

\vskip 1mm
{\bf Case 2b-a}. $I$ contains some $c_k$ with $k\ge k_0+9$. Then the estimate is trivial since $I\subset (L_k^-)^1\cup (L_k^+)^1$ and
we apply Claim A with $m=2$.

\vskip 1mm
{\bf Case 2b-b}. $I$ does not contain any $c_k$. In this case we may assume $I\subset (c_{\ell}, c_{\ell+1})$ for some $k_0+8 \le \ell \le N$.

Suppose that $I=[a,b]$ and  $a\in (L_\ell^+)^{j}$ for some $j$. If $j \le  \ell- k_0-4$, then
\[
| (L_\ell^+)^{j+1}|= |L_\ell^+| 2^{-(j+1)}= \frac{2^\ell-\ell}{ 2^{j+2}}> 2^{k_0+1},
\]
so that $I$ will intersect at most $(L_\ell^+)^{j}$ and $(L_\ell^+)^{j+1}$ and we again apply Claim A with $m=2$.

If $j \ge \ell- k_0-3$, note that then
\[
I \subset \mathop{\cup}_{j=\ell-k_0-3}^\ell (L_\ell^+)^j \cup \mathop{\cup}_{i=\ell-k_0-2}^{\ell+1} (L_{\ell+1}^-)^i \cup I_{\ell+1}.
\]
Here $i\ge \ell-k_0-2$ since
\[
|(L_{\ell+1}^-)^{\ell-k_0-2} |= \frac{2^{\ell+1}-(\ell+1)}{2^{\ell-k_0-1 }}> \frac{2^{\ell}}{2^{\ell-k_0-1 }}=2^{k_0+1}\ge  \ell(I).
\]
Hence we have
\begin{align*}
\frac {w(I)}{|I| \essinf\limits_{x\in I} w(x)}&\le \frac{\sum\limits_{j=\ell-k_0-3}^\ell w((L_\ell^+)^j )+ \sum\limits_{i=\ell-k_0-2}^{\ell+1} w((L_{\ell+1}^-)^i )+ w(I_{\ell+1}) }{2^{k_0} 2^{\ell-k_0-3}}\\
&\lesssim  \frac{\sum\limits_{j=\ell-k_0-3}^\ell 2^j\cdot 2^{\ell-j}+  \sum\limits_{i=\ell-k_0-2}^{\ell+1} 2^i \cdot 2^{\ell+1-i} + (\ell+1)2^{\ell+2}}{2^\ell }
\lesssim \ell.
\end{align*}
It remains to consider the case $a\in I_{\ell+1}\cup L_{\ell+1}^-$. However, in this case we just need to discuss whether $b\in (L_{\ell+1}^-)^{j}$ with some  $j \le  \ell- k_0-3$ or not, which is completely similar. This completes the proof.
\end{proof}

\section*{Acknowledgement}
We would like to thank the anonymous referee for his/her careful reading that helped improving the presentation of the paper.

\end{document}